\DeclareFontFamily{U}{matha}{\hyphenchar\font45}
\DeclareFontShape{U}{matha}{m}{n}{
  <-6> matha5 <6-7> matha6 <7-8> matha7
  <8-9> matha8 <9-10> matha9
  <10-12> matha10 <12-> matha12
  }{}
\DeclareSymbolFont{matha}{U}{matha}{m}{n}
\DeclareMathSymbol{\Lt}{3}{matha}{"CE}
\newtheorem{lemma}{Lemma}[section]
\newtheorem{theorem}[lemma]{Theorem}
\newtheorem*{theorem*}{Theorem}
\newtheorem{prop}[lemma]{Proposition}
\newtheorem{remark}[lemma]{Remark}
\numberwithin{equation}{section}
\DeclareMathOperator{\curl}{curl}
\DeclareMathOperator{\divb}{div}
\begin{document}
\title{A measure\,-\,$L^\infty$ div-curl lemma}
\author[Valeria Banica]{Valeria Banica}
\address[Valeria Banica]{Sorbonne Universit\'e, CNRS, Universit\'e de Paris, Laboratoire Jacques-Louis Lions (LJLL), F-75005 Paris, France}
\email{Valeria.Banica@sorbonne-universite.fr}

\author[Nicolas Burq]{Nicolas Burq}
\address[Nicolas Burq]{Laboratoire de math\'ematiques d'Orsay, CNRS, Universit\'e Paris-Saclay, B\^at.~307, 91405 Orsay Cedex, France and Institut Universitaire de France}
\email{Nicolas.Burq@universite-paris-saclay.fr } 
\begin{abstract}In this note we give a very short proof of the div-curl lemma in the limit conjugate case $\mathcal M-L^\infty$, where $\mathcal{M}$ is the set of Radon measures on $\mathbb{R}^d$. The proof follows the classical approach by defining here the product in the sense of distributions via a non unique microlocal Hodge's decomposition. 
The result is valid for many other spaces than  $\mathcal M-L^\infty$, including the classical div-curl lemma spaces $L^p-L^{p'}$ for $1<p<\infty$, and spaces of non conjugated regularity.
\end{abstract}
\date\today
\maketitle
\section{Introduction}
Div-curl lemmas have been introduced by Murat and Tartar while establishing the compensated compactness theory in the end of the seventies (\cite{Mu1},\cite{Mu2},\cite{Ta1},\cite{Ta2}, see also \S 7,17 of \cite{Tartarbook}, and \cite{Mu3}).

The classical result states, for $1<p<\infty$ and $p'$ its conjugate, that given two sequences $(v_n)$ and $(w_n)$ uniformly bounded and weakly converging in $L^{p}$ and $L^{p'}$respectively, such that $(\divb\, v_n)$ is uniformly bounded in $L^p$ and $(\curl w_n)$ is uniformly bounded in $L^{p'}$, their product $(v_n\cdot w_n)$ converges in the sense of distributions\footnote{Another slightly different classical setting is the case, based mainly on the case $p=2$, where the sequences $(v_n)$ and $(w_n)$ are bounded and weakly converging in $L^{p}$ and $L^{p'}$respectively, and such that $(\divb\, v_n)$ and $(\curl w_n)$ are only compact in $W^{-1,p}$ and $W^{-1,p'}$ respectively.}. The classical div-curl lemma has several different proofs\footnote{In the case $p=2$ an approach based on default measures can be used. However these objects do not have an obvious extension to the $L^1/L^\infty$ regularity.}, see for instance \cite{Pr} for a review. Its classical proof uses Hodge's decomposition on divergence-free fields $w_n=y_n+\nabla z_n$, retrieving for instance uniform boundedness of $z_n$ in $W^{1,p'}$, which allows for compactness in $L^{p'}$. We note that here $1<p<\infty$, thus Hodge's decomposition on divergence-free fields is classical. We also note that in the conjugate-exponent spaces $L^p-L^{p'}$ there is no issue with defining the products $v_n\cdot w_n$.

A recent extension has been obtained by Briane, Casado-D\'iaz and Murat in \cite{Br} by considering for the regularity of $v_n$ and $w_n$, instead of $L^p-L^{p'}$, the spaces $L^p-L^q$ with $\frac 1p+\frac 1q=1+\frac 1d$ and $1<p,q<\infty$, and by considering for $\divb v_n$ and $\curl w_n$ some other appropriate regularity, see Theorem 2.3 in \cite{Br}. They also considered the spaces $\mathcal M-L^d$ and $L^d-\mathcal M$, where $\mathcal{M}$ is the set of Radon measures, see Theorems 3.1 and 4.1 respectively in \cite{Br}. In the first case $L^p-L^q$ the authors give a definition of the product $v_nw_n$ in the sense of distributions, based on Hodge's decompositions on divergence-free fields for both $v_n$ and $w_n$. In the remaining two cases, involving $\mathcal M$, firstly the product is defined by proving moreover an extension to measures of the representation of divergence free functions in $L^1$ of Br\'ezis and Van Schaftingen from \cite{BrVS}. 
Then, once the product is well defined, for instance in the first case $L^p-L^q$, the uniform boundedness of $z_n$ in $W^{1,q}$ does not allow for compactness in $L^{p'}$, since $W^{1,q}$ is embedded in $L^{q^*}=L^{p'}$ but without compactness. The authors use the defect of compactness of the embedding, with the help of the celebrated concentration compactness principle of Lions \cite{Li}, to get a $\mathcal D'$-limit for $v_nz_n$, involving $v\cdot w$ and a combination of Dirac measures. \\

The main aim\footnote{The motivation to look at div-curl lemmas came to us, after having obtained in \cite{BBWFL1} results of $L^1$-regularity in linear situations, by having in mind nonlinear situations.} of this note is to give a short proof for the limit conjugate case $\mathcal M-L^\infty$. We shall use a non unique microlocal Hodge's decomposition that, contrary to the classical one, does not involve a divergence free field part. It will be our main ingredient, that will allow us to define the products $v_n\cdot w_n$ and $v\cdot w$, and modifying slightly the standard approach to obtain very easily a limit in the sense of distributions for $v_n\cdot w_n$. The method applies to much more spaces than $\mathcal M-L^\infty$, as stated in Proposition \ref{prop1} and Theorem \ref{theo1}.

Let $d\geq 2$, $1\leq  p \leq +\infty$, and $\mathcal F$ a function space among $L^p$, Sobolev spaces and $\mathcal M_0$, the set of finite Radon measures. We denote  
\begin{equation}
\begin{aligned}
\mathcal{F}_{\curl}&:= \{ u \in \mathcal F( \mathbb{R}^d, \mathbb{R}^d), \curl u \in \mathcal F( \mathbb{R}^d, \mathbb{R}^{d^2})\},\\
\mathcal{F}_{\divb}&:= \{ u \in \mathcal F( \mathbb{R}^d, \mathbb{R}^d),\divb u \in \mathcal F( \mathbb{R}^d, \mathbb{R})\},\\
\end{aligned}
\end{equation} 
and we endow these spaces with their natural norms. For sake of simplicity we do not specify anymore the natural domain and range of the functions. The spaces $\mathcal{F}_{\divb}$ and $\mathcal{F}_{\curl}$ that we shall consider in the sequel are continuously stable by multiplication by a $\mathcal C^\infty_0$ localization.

\begin{remark} Since the conclusions in Proposition \ref{prop1} and Theorem~\ref{theo1} below are in $\mathcal{D}'$, thus local, we can weaken the global hypothesis and assume only local conditions as for instance
$ \mathcal{M}_{\divb}, (L^\infty_{\curl})_{\text{loc}}$ instead of $\mathcal{M}_{0,\divb},L^\infty_{\curl}$. In particular we could place ourselves in the case of an open set $\Omega$ of $\mathbb R^d$ instead of $\mathbb R^d$ and consider local spaces in $\Omega$. 
\end{remark}

We start with a result ensuring the existence of products in the sense of distributions between various spaces.

\begin{prop}\label{prop1} Let $ (v,w) \in \mathcal M_{0,\divb} \times L_{\curl}^\infty$ or belonging to one of  the following spaces:
\begin{enumerate}
\item $\mathcal M_{0,\divb} \times W_{\curl}^{-1+ \epsilon, \infty}$, for $\epsilon>0$, 
\item $\mathcal M_{0,\divb} \times W_{\curl}^{-1+\frac dp, p}$, for $d<p< \infty$, 
\item  $W^{-1+\epsilon,\infty}_{\divb} \times \mathcal M_{0,\curl}$, for $\epsilon>0$, 
\item  $W^{-\delta+\epsilon,p'}_{\divb} \times \mathcal M_{0,\curl}$, for $1< p\leq \frac d{d-1+\delta}$\footnote{The condition $1<p\leq \frac d{d-1+\delta}$ is equivalent to $\frac{d}{1-\delta}\leq p'< \infty$, thus varying $\delta$ in $(0,1)$ we have the range $d<p'< \infty$}. and $\delta\in(0,1),\epsilon>0$, 
\item  $W_{\divb}^{-\alpha,p'}\times W_{\curl}^{-1+\alpha, p}$, for $1< p< \infty $, $\alpha\in \mathbb R$,
\item  $W_{\divb}^{-\alpha,p'}\times W_{\curl}^{-1+\alpha+\epsilon, p}$, for $p\in\{1,\infty\}$, $\epsilon>0$, $\alpha\in \mathbb R$.
\end{enumerate}
Then $w$ admits a class of Hodge-type decompositions $w=y+\nabla z$ such that the following quantity
\begin{equation}\label{Hodgeprod}
(v\cdot w)_H:=v\cdot y -(\divb\,v)z+\divb\,(vz).
\end{equation}
is well-defined as a distribution, is independent of the choice of the decomposition, and coincides with the usual product if $(v,w) \in\mathcal C^\infty_0\times \mathcal C ^\infty_0$ or if $(v,w) \in L_{\divb}^{p'}\times L_{\curl}^p$ for $1\leq p\leq \infty$. 

Moreover, in all space $\mathcal{F}_{\divb}\times\mathcal{\tilde F}_{\curl}$ of the cases (ii)-(iv)-(v), thus not involving $L^\infty$ for which $C^\infty_0$ is not dense, the classical product map
$$ (v,w) \in\mathcal C^\infty_0\times \mathcal C ^\infty_0 \mapsto v \cdot w  \in \mathcal C ^\infty_0 $$
admits $(vw)_H$ as a unique continuous extension from  $\mathcal{F}_{\divb}\times\mathcal{\tilde F}_{\curl}$ to $\mathcal{D}' $. 
\end{prop}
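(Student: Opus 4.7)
My plan is to realize the class of Hodge-type decompositions via a low-frequency microlocal cutoff, and then reduce the remaining steps to classical elliptic regularity combined with the distributional Leibniz rule. Fix $\chi\in\mathcal C^\infty_0(\mathbb R^d)$ equal to $1$ near the origin, and use the vector identity $\Delta w=\divb\curl w+\nabla\divb w$ on $\mathcal S'$ to write
\begin{equation*}
w=\underbrace{\chi(D)\,w+(1-\chi(D))\,\Delta^{-1}\divb\curl w}_{=:\,y}\;+\;\nabla\underbrace{[(1-\chi(D))\,\Delta^{-1}\divb w]}_{=:\,z}.
\end{equation*}
Varying $\chi$, or more generally allowing any splitting $w=y+\nabla z$ with $y,z$ in suitable function classes, produces the non-unique class of Hodge-type decompositions referred to in the proposition.

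The key regularity observation is that the Fourier multipliers $(1-\chi(D))\Delta^{-1}\partial_j$ and $(1-\chi(D))\Delta^{-1}\partial_j\partial_k$ have smooth symbols vanishing near the origin, of respective orders $-1$ and $0$. Mikhlin--H\"ormander then delivers $L^p$-boundedness for $1<p<\infty$, which combined with Sobolev/Morrey embeddings handles cases (ii), (iv), (v); in case (vi) the extra $\epsilon$ assumed on $\curl w$ (or on $\divb v$) absorbs the endpoint loss of the multipliers at $p\in\{1,\infty\}$. At the base case $\mathcal M_{0,\divb}\times L^\infty_{\curl}$, the order $-1$ gain sends the output of $(1-\chi(D))\Delta^{-1}\divb$ acting on $L^\infty$ into the Zygmund class $\mathcal C^1_*\hookrightarrow\mathcal C_b$; hence $y,z\in\mathcal C_b$, and each of the three terms of \eqref{Hodgeprod} is well-defined as the pairing of a finite Radon measure with a continuous bounded function. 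Parallel H\"older/Besov arguments, exploiting the assumed extra regularity on $\curl w$ or $\divb v$, handle (i) and (iii).

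Independence of $(v\cdot w)_H$ from the chosen decomposition is then purely algebraic: if $w=y+\nabla z=y'+\nabla z'$, set $h:=z'-z$ so that $y-y'=\nabla h$; the distributional Leibniz rule $\divb(vh)=(\divb v)\,h+v\cdot\nabla h$ gives
\begin{equation*}
(v\cdot w)_H-(v\cdot w)'_H=v\cdot\nabla h+(\divb v)\,h-\divb(vh)=0,
\end{equation*}
and Leibniz holds as soon as each term is individually meaningful, which is precisely what the regularity step ensures. Coincidence with the usual pointwise product on $\mathcal C^\infty_0\times\mathcal C^\infty_0$ and on $L^{p'}_{\divb}\times L^p_{\curl}$ is the same Leibniz identity read in the reverse direction. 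Finally, in cases (ii)-(iv)-(v) the smooth functions are dense in the $W^{s,p}$-factor with $1<p<\infty$, and the construction of $(y,z)$---hence of $(v\cdot w)_H$---is continuous from $\mathcal F_{\divb}\times\tilde{\mathcal F}_{\curl}$ to $\mathcal D'$, yielding uniqueness of the extension.

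The principal obstacle I anticipate is the endpoint multiplier analysis whenever $L^\infty$ or $\mathcal M$ appears: classical Calder\'on--Zygmund operators are unbounded on $L^\infty$, ruling out a direct use of the $L^p$ theory at $p=\infty$. What rescues the argument is precisely the strict negative order gained by $(1-\chi(D))\Delta^{-1}\partial_j$, which promotes $L^\infty$ into a H\"older--Zygmund class embedded in $\mathcal C_b$; this creates enough room to pair $y$ and $z$ against a finite Radon measure and is the microlocal feature distinguishing the method from the classical divergence-free-based Hodge decomposition.
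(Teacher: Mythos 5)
Your decomposition is, up to exchanging $\chi\leftrightarrow 1-\chi$, exactly the one used in the paper ($z=-\chi(D)(-\Delta)^{-1}\divb w$, $y=w-\nabla z=(1-\chi(D))w+\chi(D)(-\Delta)^{-1}\curl\curl w$), and your treatment of the main case $\mathcal M_{0,\divb}\times L^\infty_{\curl}$ --- an order $-1$ multiplier with symbol vanishing near the origin sends $L^\infty$ into $\mathcal C^{1}_*\hookrightarrow \mathcal C_b$, after which $y$ and $z$ pair with the finite measures $v$ and $\divb v$ --- is the paper's argument, phrased with Fourier multipliers rather than the (here equivalent) pseudodifferential calculus. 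Cases (i), (ii), (v), (vi) and the density/uniqueness statement are likewise handled as in the paper.

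There is, however, a genuine gap in cases (iii) and (iv), where the field being decomposed is the \emph{measure} $w\in\mathcal M_{0,\curl}$. You assert that Mikhlin--H\"ormander ``handles cases (ii), (iv), (v)'', but Mikhlin--H\"ormander is an $L^p\to L^p$ statement for $1<p<\infty$ and says nothing about the action of $(1-\chi(D))\Delta^{-1}\divb$ on a finite Radon measure; the phrase ``parallel H\"older/Besov arguments'' for (iii) does not address this either. What is actually needed --- and what the paper isolates as its one non-classical ingredient, proved in the appendix by a dyadic decomposition with kernel bounds $\int|K_j(x,y)|\,dx\leq C(a)2^{j\delta}$ --- is that an operator of order $\delta<0$ with symbol supported away from the origin maps $\mathcal M_0$ continuously into $W^{-\delta-\epsilon,1}$; this gives $y,z\in W^{1-\epsilon,1}$ in case (iii) and, via the embedding $W^{1-\delta,1}\subset L^p$ for $p\leq \frac{d}{d-1+\delta}$, the regularity $W^{\delta-\epsilon,p}$ dual to $v$ in case (iv). Without such a statement those two cases are not covered. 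A secondary caveat: your independence argument is only safe on the $\chi$-parametrized class, where $h=z'-z=(\chi-\chi')(D)(-\Delta)^{-1}\divb w$ is $C^\infty$ because $\chi-\chi'$ is compactly supported; for an ``arbitrary splitting $w=y+\nabla z$'' with $z$ merely in $\mathcal C^{1}_*$, the term $v\cdot\nabla h$ need not be meaningful (a measure cannot in general be paired with $\nabla h\in\mathcal C^{0}_*$), so the Leibniz cancellation cannot be run in that generality.
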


\begin{remark}

In the remaining spaces $\mathcal{F}_{\divb}\times\mathcal{\tilde F}_{\curl}$ of the cases (i)-(iii)-(vi), involving $L^\infty$, the product $(v\cdot w)_H$ is still the natural one. More precisely, if $(v_n,w_n)$
is any sequence bounded in $\mathcal{F}_{\divb}\times\mathcal{\tilde F}_{\curl}$  converging in the sense of distributions to $(v,w)$, such that the product $v_n\cdot w_n$ is well-defined in the classical sense, for instance a mollified approximation of $(v,w)$, then we will obtain by Theorem \ref{theo1} that
$$(v\cdot w)_H\overset{\mathcal D'}{=}\lim_{n \rightarrow \infty}v_n\cdot  w_n,$$
and consequently, the product $(v\cdot w)_H$ is in this cases the unique extension from $\mathcal F_{\divb}\cap\mathcal C^0\times\mathcal {\tilde F}_{\curl}\cap\mathcal C^0$ to $\mathcal D'$ which enjoys this weak continuity property. 
\end{remark}

The proof of Proposition \ref{prop1} is based on the Hodge decomposition defined in \eqref{Hodge} and on the action of pseudo-differential operators on $L^p$ and $\mathcal M_0$. The definition, notations and properties of pseudo-differential operators that will be of use in this note are given in Appendix \ref{app}.

We note that the space from {\em{(v)}} with $\alpha=0$, i.e. $W_{\divb}^{p}\times W_{\curl}^{-1, p'}$ for $1< p<\infty$, extends the classical $L^p-L^{p'}$ framework. Moreover, as a subcase we obtain from it the space $L^p_{\divb}\times L^q_{\curl}$ with  $\frac 1p+\frac 1q=1+\frac 1d$, since\footnote{We recall some Sobolev embeddings that will be used in the sequel: $W^{k,\alpha}\subset L^p$ for $k\in(0,1), k<\frac d\alpha, \alpha\in [1,\infty), p\in[\alpha,\frac{d\alpha}{d-k\alpha}]$,  
$W^{k,\alpha}\subset \mathcal C^{0,\frac{k\alpha-d}{\alpha}}$ for $k\in (0,1), k>\frac d\alpha, \alpha\in [1,\infty]$, and $W^{\delta,1}\subset L^p$ for $p\in [1,\frac d{d-\delta}]$.}
 in this case $L^q=L^{(p^*)'}\subset W^{-1,p'}$. 
We also note that from {\em{(ii)}}, since $L^d\subset W^{-1+\frac{d}{d^+},d^+}$, we obtain as a subcase the space $\mathcal M_{0,\divb}\times L^d_{\curl}$, and similarly we obtain the space $L^d_{\divb}\times \mathcal M_{0,\curl}$ as a subcase of {\em{(iv)}}. These last three cases corresponds to the ones in \cite{Br} modulo the fact that here we have the same regularity for $v_n$ and $\divb v_n$ and for $w_n$ and $\curl w_n$. To extend to different appropriate regularities for $\divb v_n$ and $\curl w_n$ as in \cite{Br} the proof could be extended in the spirit of Remark \ref{remdecv}.

Now we can state the div-curl lemma. 
\begin{theorem}\label{theo1}
The map
$$(v,w) \in \mathcal{M}_{0,\divb}\times L^\infty_{\curl}\mapsto (v\cdot w)_H \in \mathcal{D'},$$
is weakly continuous. 
More precisely, let $(v_n,w_n)$ be  a bounded sequence in $\mathcal{M}_{0,\divb} \times L^\infty_{\curl}$ converging in the sense of distributions to $(v,w)$:
$$ v_n \overset{\mathcal{D}'}{\rightharpoonup} v, \qquad w_n \overset{\mathcal{D}'}{\rightharpoonup}  w,$$
so in particular $(v,w)\in  \mathcal{M}_{0,\divb}\times L^\infty_{\curl}$. 
Then we have the product convergence:
$$ (v_n\cdot w_n)_H \overset{\mathcal{D}'}{\rightharpoonup}  (v\cdot  w)_H,$$
where the products are defined by Proposition~\ref{prop1}.
The result is valid also with $\mathcal{M}_{0,\divb} \times L^\infty_{\curl}$ replaced by:\par
\,\hspace{3mm}{\it{(i) $\mathcal M_{0,\divb} \times W_{\curl}^{-1+ \epsilon, \infty}$, for $\epsilon>0$,}} \par
\,\hspace{1mm}{\it{(ii)$^*$ $\mathcal M_{0,\divb} \times W_{\curl}^{-1+\frac dp+\epsilon, p}$, for $d<p< \infty$, $\epsilon>0$,}}\par
\,\hspace{1mm}\it{(iii)}  $W^{-1+\epsilon,\infty}_{\divb} \times \mathcal M_{0,\curl}$, for $\epsilon>0$, \par
\,\hspace{2mm}\it{(iv)} $W^{-\delta+\epsilon,p'}_{\divb} \times \mathcal M_{0,\curl}$, for $1< p\leq \frac d{d-1+\delta}$ and $\delta\in(0,1),\epsilon>0$, \par
\,\hspace{2mm}\it{(v)$^*$}  $W_{\divb}^{-\alpha,p'}\times W_{\curl}^{-1+\alpha+\epsilon, p}$, for $1< p< \infty $, $\epsilon>0$, $\alpha\in \mathbb R$, \par
\,\hspace{2mm}\it{(vi)}  $W_{\divb}^{-\alpha,p'}\times W_{\curl}^{-1+\alpha+\epsilon, p}$, for $p\in\{1,\infty\}$, $\epsilon>0$, $\alpha\in \mathbb R$.

\end{theorem}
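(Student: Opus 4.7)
The plan is to reduce the theorem to a statement about convergence of products in which one factor is strongly (uniformly) compact, exactly as in the classical proof of the div-curl lemma, but built on the non-unique microlocal Hodge decomposition of Proposition~\ref{prop1} instead of the usual one. Concretely, I fix a decomposition $w_n = y_n + \nabla z_n$ in which $y_n$ depends only on $\curl w_n$, obtained by applying to $\curl w_n$ a pseudo-differential operator with principal symbol of order $-1$ (essentially a regularised version of $-\Delta^{-1}\curl^{*}$, cut off at low frequencies as in the appendix). Then $z_n$ is determined up to a constant by $\nabla z_n = w_n - y_n$.

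The key point is to extract strong local compactness of $y_n$ and $z_n$. Since $\curl w_n$ is bounded in $L^\infty$ and the operator producing $y_n$ has order $-1$, the pseudo-differential mapping properties of Appendix~\ref{app} give that $y_n$ is bounded in $W^{1,\infty}_{\text{loc}}$, and therefore relatively compact in $C^0_{\text{loc}}$. From $\nabla z_n = w_n - y_n \in L^\infty_{\text{loc}}$ one gets $z_n$ bounded in $W^{1,\infty}_{\text{loc}}$, again relatively compact in $C^0_{\text{loc}}$. Because the microlocal decomposition commutes with weak distributional limits, the $\mathcal{D}'$-limit of $y_n$ (resp.\ $z_n$) is forced to be exactly the $y$ (resp.\ $z$) associated to $w$, so the full sequences converge: $y_n \to y$ and $z_n \to z$ in $C^0_{\text{loc}}$.

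Armed with this, passing to the limit in
$$ (v_n\cdot w_n)_H = v_n\cdot y_n - (\divb v_n)\,z_n + \divb(v_n z_n) $$
is routine. For any $\varphi\in\mathcal{C}^\infty_0$, $\langle v_n\cdot y_n,\varphi\rangle = \int \varphi\,y_n\,dv_n$, and since $\varphi y_n\to \varphi y$ uniformly while $v_n\rightharpoonup v$ narrowly against continuous compactly supported test functions (using boundedness in $\mathcal{M}_0$), the pairing converges to $\langle v\cdot y,\varphi\rangle$. The identical argument applied to the bounded sequence $\divb v_n\in\mathcal{M}_0$ gives $(\divb v_n)z_n \to (\divb v)z$ in $\mathcal{D}'$, and applied to $v_n$ against the continuous function $z_n$ gives $v_n z_n\to vz$ in $\mathcal{D}'$; distributional differentiation then yields $\divb(v_n z_n)\to \divb(vz)$. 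Summing gives $(v_n\cdot w_n)_H \rightharpoonup (v\cdot w)_H$.

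Cases (i)--(vi) follow the same template, with the only change being the space in which $y_n$ and $z_n$ gain just enough regularity to be locally compact in a space paired with the space containing $v_n$ (e.g.\ in cases (v)-(vi) the $\epsilon$-loss is used to turn a bounded embedding $W^{-1+\alpha+\epsilon,p}\hookrightarrow W^{-1+\alpha,p}$ into a compact one via Rellich). I expect the main obstacle, and the only genuinely non-mechanical step, to be the verification that the pseudo-differential operators constructed in the appendix indeed act continuously from $L^\infty$ (or $\mathcal{M}_0$) into the appropriate local Sobolev space: ordinary Calderón-Zygmund theory is unavailable on $L^\infty$, so one must rely on the order $-1$ of the symbols (away from the origin) and on the low-frequency truncation to land in $W^{1,\infty}_{\text{loc}}$ without invoking boundedness of singular integrals.
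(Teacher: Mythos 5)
Your strategy is the paper's: decompose $w_n$ via the low--frequency--truncated Hodge operators, gain one derivative on $Y_\chi w_n$ and $Z_\chi w_n$ from the order $-1$ pseudodifferential bounds, extract local $\mathcal C^0$ compactness, identify the limits, and pass to the limit term by term against the weak-$*$ convergent measures $v_n$ and $\divb v_n$. One small inconsistency in your setup: $y_n$ cannot depend only on $\curl w_n$. With the cut-off $\chi(D)$ one has $Y_\chi w_n=(1-\chi(D))w_n+\chi(D)(-\Delta)^{-1}\curl\curl w_n$, and the low-frequency piece $(1-\chi(D))w_n$ must be kept inside $y_n$, otherwise $w_n-y_n$ is not a gradient and $z_n$ does not exist. (Also, on $L^\infty$ the appendix only yields boundedness of $Y_\chi w_n, Z_\chi w_n$ in $W^{1-\epsilon,\infty}$, not $W^{1,\infty}$; this is harmless for the $\mathcal C^0_{\mathrm{loc}}$ compactness.)

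The one genuine gap is your assertion that ``the microlocal decomposition commutes with weak distributional limits'': this is precisely the step that requires an argument, and as stated it is not true. The operator $Z_\chi=-\chi(D)(-\Delta)^{-1}\divb$ is a nonlocal Fourier multiplier; it does not even act on $\mathcal D'$, and $\mathcal D'$-convergence of $w_n$ alone does not give convergence of $Z_\chi w_n$ in any sense. The paper closes this by duality using the uniform bound: for $\phi\in\mathcal C^\infty_0$ the adjoint satisfies $(-\chi(D)(-\Delta)^{-1}\divb)^*\phi\in\mathcal S\subset L^1$, and since $w_n$ is bounded in $L^\infty$ and converges in $\mathcal D'$, it converges weak-$*$ in $L^\infty$ against $L^1$ test functions, whence $\langle Z_\chi w_n,\phi\rangle\to\langle Z_\chi w,\phi\rangle$; combined with the $\mathcal C^0_{\mathrm{loc}}$ precompactness this identifies $z=Z_\chi w$, hence $y=Y_\chi w$, and upgrades subsequential convergence to convergence of the full sequence. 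With this (or an equivalent) argument supplied, the rest of your limit passage coincides with the paper's and is correct, including the role of the $\epsilon$-loss in cases (i)--(vi), where it is indeed what turns bounded embeddings into locally compact ones.
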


The proof of Theorem \ref{theo1} is based on Proposition \ref{prop1} and on  the fact that the smooth localization $u\in\ W^{\epsilon,p}\to \chi u \in L^p$ is compact for $\epsilon>0$ and $1\leq p\leq \infty$. Thus a bit of regularity is lost with respect to Proposition \ref{prop1}. This explains why Theorem \ref{theo1} is valid in all the functional settings listed in Proposition \ref{prop1} with slight modifications for the spaces in {\em{(ii)}} and {\em{(v)}}, i.e. for instance $W_{\divb}^{-\alpha,p'}\times W_{\curl}^{-1+\alpha, p}$, for $1< p<\infty $, $\alpha\in \mathbb R$, is replaced by $W_{\divb}^{-\alpha,p'}\times W_{\curl}^{-1+\alpha+\epsilon, p}$.

Theorem \ref{theo1} is not considering the spaces $W_{\divb}^{-\alpha,p}\times W_{\curl}^{-1+\alpha, p'}$, for $1< p<\infty$, and in particular the subset $L^p_{\divb}\times L^q_{\curl}$ with $\frac 1p+\frac 1q=1+\frac 1d, 1<p<\infty$. This is normal, since in this case concentration effects in terms of Dirac measures can appear in the limit of the product, as in Example 2.10 in \cite{Br}, and more generally in Theorem 2.1 in \cite{Br} that describes the limit of $v_nw_n$ in a more involved than simply $vw$. To get such a result here one could continue the analysis in the spirit of \cite{Br} by exploiting the defect of compactness of the embedding $W^{1,q}\subset L^{p'}$.  \\



{\bf{Aknowledgements: }} The authors thank Anne-Laure Dalibard, Gilles Francfort, Antoine Gloria and Fran\c{c}ois Murat for enriching discussions on the div-curl lemma. 
Both authors acknowledge the funding from the European Research Council (ERC) under the
European Union's Horizon 2020
research and innovation programme (Grant agreement 101097172 -- GEOEDP). The first author was also partially supported by the French ANR project BOURGEONS.

\section{Proof of Proposition~\ref{prop1}}

We shall use a Hodge decomposition as follows. 
 For $w\in \mathcal S '$, the space of temperate distributions, we define the operators $Z_\chi$ and $Y_\chi$ by 
 \begin{equation}\label{Hodge}
 Z_\chi w := -\chi(D)(-\Delta)^{-1}\divb w \in \mathcal D ', \quad Y_\chi w:=w-\nabla Z_\chi w\in\mathcal  D '
 \end{equation}
 where $\chi\in \mathcal C^\infty ( \mathbb{R}^d)$ is a localisation outside the origin that removes the singularity of the symbol of $(-\Delta)^{-1}$, i.e. $\chi$ is equal to $1$ on $\,^cB(\delta)$ for some $\delta>0$ and vanishes in a neighborhood of $0$. Thus $w$ decomposes as:
  \begin{equation}\label{Hodgedec}
 w=Y_\chi w+\nabla Z_\chi w,
  \end{equation}
 with $Y_\chi w$ that is not divergence free, contrary to the classical Hodge's decomposition,  and moreover it depends on $\chi$. 
 
 Usually one retrieves regularity for the free-divergence field part of the classical Hodge decomposition by an elliptic inversion argument, here we could do so also, but we simply note that
 $$Y_\chi w=w-\nabla Z_\chi w=w+\nabla \chi(D)(-\Delta)^{-1}\divb w=w+\chi(D)(-\Delta)^{-1}\nabla  \divb w$$
 $$=w+\chi(D)(-\Delta)^{-1}(\Delta+\curl\curl) \,w=(1-\chi(D))w+\chi(D)(-\Delta)^{-1}\curl \curl w.$$
 Thus, using pseudodifferential operators, whose definition, notations and properties that will be of use in this note are given in the Appendix \ref{app}, we have
$$
Y_\chi w\in \Psi^{-\infty} w+\Psi^{-1}\curl w.
$$
On the other hand, we shall retrieve regularity for $Z_\chi w$ from its definition since:
$$
 Z_\chi w=  -\chi(D)(-\Delta)^{-1}\divb \,w \in \Psi^{-1}w.
$$
Summing up we have obtained the following regularity correspondance between $w$ and its decomposition terms $Y_\chi w$ and $Z_\chi w$:
 \begin{equation}\label{YZreg} 
\left\{\begin{array}{c}Y_\chi w\in \Psi^{-\infty} w+\Psi^{-1}\curl w,\\  Z_\chi w\in \Psi^{-1}w.\end{array}\right.
  \end{equation}
  
  Now we are ready to define the product in the statement of Proposition \ref{prop1}. We use the Hodge decomposition \eqref{Hodgedec} of $w$ to define, whenever each product term in the following is well defined in the sense of distributions, the quantity:
\begin{equation}\label{prod}
(v\cdot w)_\chi =v\cdot Y_\chi w -(\divb\,v)Z_\chi w+\divb\,(v\,Z_\chi w).
\end{equation}
We  notice that  for $ (v,w) \in \mathcal C^\infty_0\times \mathcal C ^\infty_0$ or if $(v,w) \in L_{\divb}^{p'}\times L_{\curl}^p$ for $1\leq p\leq \infty$, this quantity identifies with the usual product:
$$v\cdot w=(v\cdot w)_\chi.$$

\begin{remark}\label{remdecv} Note that we have also the following regularity properties of $Y_\chi v$ and $Z_\chi v$:
 \begin{equation}\label{YZregbis} 
\left\{\begin{array}{c}Y_\chi w\in \Psi^{0} w,\quad \divb Y_\chi w\in \Psi^{-\infty} w,\\  Z_\chi w\in \Psi^{-2}\divb w,\end{array}\right.
  \end{equation}
that are useful if we have regularity information on $\divb w$ or if we need regularity on $\divb Y_\chi w$. Typically when the regularity is different for $v$ and $\divb v$, as in \cite{Br}, both decompositions of $v$ and $w$ are needed to define the product $vw$. 
In our case the Hodge decomposition \eqref{Hodge} is not on free divergence but on smooth divergence  fields, and the product formula \eqref{prod} can be extended to
\begin{equation}\label{prodbis}
v\cdot w =Y_\chi v\cdot Y_\chi w +\divb (Z_\chi v Y_\chi w)-(\divb Y_\chi w)\,Z_\chi v-(\divb Y_\chi v)\,Z_\chi w+\divb(Y_\chi v\,Z_\chi w)+\nabla Z_\chi v\cdot \nabla Z_\chi w.
\end{equation}
In this note we stick to the classical framework of same regularity for $v$ and $\divb v$ as well as for $w$ and $\curl w$, and we will need to decompose only $w$ and to use \eqref{YZreg}. 
\end{remark}

In the following we shall prove Proposition \ref{prop1}, case by case. We place ourselves first in the case of the space $\mathcal{M}_{0,\divb}\times L^\infty_{\curl}$. 
From \eqref{YZreg} and Proposition \ref{pseudo} (ii) on the action of pseudodifferential operators on $L^\infty$, we get that$$
Y_\chi w, Z_\chi w\in \Psi^{-1}L^\infty\subset W^{1^-,\infty}\subset \mathcal C^0.
$$
In particular each product term in the \eqref{prod}  is well defined in the sense of distributions, and moreover the product does not depend on the choice of the  function $\chi$ in the Hodge decomposition \eqref{Hodge}. Indeed, if $\tilde\chi$ is another cut-off as $\chi$, i.e. equal to $1$ outside a neigborhood of the origin and vanishing in a neighborhood of $0$.  then $\chi - \tilde\chi$ is compactly localized, thus  $(\chi - \tilde\chi)( D)\in\Psi^{-\infty}$ and we have

$$
\begin{gathered}
\delta: = Z_{\chi} w - Z_{\tilde\chi}w = (\chi - \tilde\chi)( D) (- \Delta)^{-1} \divb w \in C^\infty ,\\
  Y_{\chi} w - Y_{\tilde \chi} w= \nabla \delta  \in C^\infty,
  \end{gathered}
$$
and we deduce 
$$
 (v\cdot w)_{\chi}- (v\cdot w)_{{\tilde\chi} } =v\cdot  \nabla \delta - \divb (v)\, \delta  + \divb ( v\, \delta ) ,
$$
 which is equal to $0$ in the sense of distributions because $\delta $ is smooth. Thus, considering in the statement the class of Hodge decompositions $w=Y_\chi w+\nabla Z_\chi w$ from \eqref{Hodge}, the product $(v\cdot w)_H$ in \eqref{Hodgeprod} equals to $(vw)_\chi$, thus it is well-defined, independent of the choice of $\chi$ in the Hodge decomposition \eqref{Hodge}, and coincides with the usual product for $(v,w) \in \mathcal C^\infty_0\times \mathcal C ^\infty_0$. \\

We now turn to the others space cases of Proposition~\ref{prop1}. We consider first the $L^\infty_{\divb} \times \mathcal M_{0,\curl}$ case. The proof will go the same lines as before. For $w\in\mathcal C^\infty_0$,  from \eqref{YZreg} and Proposition \ref{pseudo} (iii) on the action of pseudodifferential operators of negative order on $\mathcal M_0$, we get that
$$Y_\chi w, Z_\chi w\in \Psi^{-1}\mathcal M_{0} \subset W^{1^-,1}\subset L^1,$$
and we conclude as before the proof of Proposition~\ref{prop1}. \\

Treating the cases:\\
(v)  $W_{\divb}^{-\alpha,p'}\times W_{\curl}^{-1+\alpha, p}$, for $1< p<\infty $, $\alpha\in \mathbb R$,\\
(vi)  $W_{\divb}^{-\alpha,p'}\times W_{\curl}^{-1+\alpha+\epsilon, p}$, for $p\in\{1,\infty\} $, $\epsilon>0$, $\alpha\in \mathbb R$.,\\
goes similarly, by retrieving the conjugate regularity from \eqref{YZreg} and Proposition \ref{pseudo} (i).

Treating the remaining cases goes also the same, by using moreover Sobolev embeddings and and Proposition \ref{pseudo} (ii)-(iii):\\
(i) $\mathcal M_{0,\divb} \times W_{\curl}^{-1+ \epsilon, \infty}$, for $\epsilon>0$, uses $\Psi^{-1}W^{-1+\epsilon,\infty}\subset W^{\epsilon,\infty}\subset \mathcal C^0$,\\
(ii) $\mathcal M_{0,\divb} \times W_{\curl}^{-1+\frac dp+ \epsilon, p}$, for $d<p< \infty$, $\epsilon>0$, uses
$$\Psi^{-1}W^{-1+\frac dp+ \epsilon, p}\subset W^{\frac dp+\epsilon,p}\subset \mathcal C^0,$$
(iii)$W^{-1+\epsilon,\infty}_{\divb} \times \mathcal M_{0,\curl}$, for $\epsilon>0$, uses
$$\Psi^{-1}\mathcal M_{0} \subset W^{1-\epsilon,1},$$
(iv) $W^{-\delta+\epsilon,p'}_{\divb} \times \mathcal M_{0,\curl}$, for $1< p\leq \frac d{d-1+\delta}$, $\delta\in(0,1), \epsilon>0$, uses
$$\Psi^{-1}\mathcal M_{0} \subset \Psi^{\epsilon-\delta}\Psi^{-1+\delta-\epsilon}\mathcal M_{0} \subset  \Psi^{\epsilon-\delta}W^{1-\delta,1}\subset \Psi^{\epsilon-\delta}L^p\subset W^{\delta-\epsilon,p}.$$

To prove the last extension result in Proposition \ref{prop1}, say for $\mathcal{M}_{0,\divb}\times W^{-1+\frac dp+\epsilon,p}_{\curl}$ with $d<p<\infty$ (the other cases follow similarly), we use the same arguments before to get that the map 
$$ (v,w) \in \mathcal C^\infty_0\times \mathcal C^\infty_0\mapsto v w  \in \mathcal{D}'$$
is continuous for the  $\mathcal{M}_{0,\divb}\times W^{-1+\frac dp+\epsilon,p}_{\curl}$ topology.
Indeed, as before, we get from \eqref{YZreg} that along with 
$$
Y_\chi w, Z_\chi w\in \Psi^{-1}W^{-1+\frac dp+\epsilon,p}\subset W^{\frac dp+\epsilon,p}\subset \mathcal C^0,
$$
we have also the continuity for the $W^{-1+\frac dp+\epsilon,p}_{\curl}$ topology of the maps 
$$
w\in C^\infty_0 \mapsto Y_\chi w  \in \mathcal C^0,\quad w\in C^\infty_0 \mapsto Z_\chi w  \in \mathcal C^0.
$$
As a consequence we deduce that the maps 
\begin{equation}\label{maps} 
 (v,w) \in \mathcal C^\infty_0\times \mathcal C^\infty_0  \mapsto \begin{cases}v\cdot Y_\chi w  \in \mathcal{D}',\\ (\divb v) Z_\chi w \in \mathcal{D}',\\
v\,Z_\chi w \in \mathcal{D}',
\end{cases}
\end{equation}
are continuous for the  $\mathcal{M}_{0,\divb}\times W^{-1+\frac dp+\epsilon,p}_{\curl}$ topology, thus so is the map 
$$ (w\cdot v) \in \mathcal C^\infty_0 \times \mathcal C^\infty_0 \mapsto w\cdot v =v\cdot Y_\chi w-(\divb\, v)Z_\chi w+\divb\,(v\,Z_\chi w) \in \mathcal{D}'.$$ 
Consequently the product map has a unique extension from $\mathcal{M}_{0,\divb}\times W^{-1+\frac dp+\epsilon,p}_{\curl}$ to $\mathcal D'$. This extension defines the product on $ \mathcal{M}_{0,\divb}\times W^{-1+\frac dp+\epsilon,p}_{\curl}$ as a distribution, and it does not depend on the choice of the cut-off function $\chi$ because it coincides, whatever this choice, with the usual product on the dense subspace $\mathcal C^\infty_0\times \mathcal C^\infty _0$.\\

\section{Proof of Theorem~\ref{theo1}}
We consider a Hodge decomposition as in \eqref{Hodgedec}:
$$w_n=Y_\chi w_n+\nabla Z_\chi w_n.$$
Let $\varphi\in\mathcal C^\infty_0$. To prove Theorem~\ref{theo1} , we study the convergence of the sequence
$$\langle (v_nw_n)_H,\varphi\rangle= \int v_nY_\chi w_n \varphi- (\divb\,v_n)Z_\chi w_n\varphi- v_nZ_\chi w_n\nabla \varphi.$$
Let $K$ be a compact subset of $\mathbb R^d$ containing the support of $\varphi$. Let $\psi \in  \mathcal C^\infty_0$ equal to $1$ on $K$ and let $\tilde\psi \in  \mathcal C^\infty_0$ equal to $1$ on $K$ and supported on the set where $\nabla\psi$ vanishes. 
By localizing we have
$$\tilde\psi\psi\, w_n=\tilde\psi\psi\, Y_\chi w_n+\tilde\psi\psi\, \nabla Z_\chi w_n$$
$$=\tilde\psi\psi\, Y_\chi w_n+\tilde\psi\,\nabla (\psi\, Z_\chi w_n)-\tilde\psi\,Z_\chi w_n\nabla\psi=\tilde\psi\psi\, Y_\chi w_n+\tilde\psi\,\nabla (\psi\, Z_\chi w_n). $$
%
\medskip

We consider first the case $\mathcal M_{0,\divb}\times L^\infty_{\curl}$. 
Since by hypothesis $\{w_n\}$ is a bounded sequence in $L^\infty_{\curl}$ it follows from \eqref{YZreg}  that $\{Y_\chi w_n\}$ and $\{Z_\chi w_n\}$ are bounded sequence in $W^{1-\epsilon,\infty}$. Therefore $\{\tilde\psi\psi\, Y_\chi w_n\}$ and $\{\psi\, Z_\chi w_n\}$  are precompact in $W^{1-2\epsilon,\infty}$ and thus also in $\mathcal C^0$. Up to a subsequence, for which we drop the subsequence indices for simplicity, we have the existence of continuous limits $y$ and $z$:
$$\tilde\psi\psi\,  Y_\chi w_n\overset{\mathcal C^0}{\longrightarrow} y,\quad \psi\, Z_\chi w_n\overset{\mathcal C^0}{\longrightarrow} z,$$
with values in $K$ independent of $\psi,\tilde\psi$, that we use to define two functions globally in space, that we still call $y$ and $z$.

We note that we have obtained convergence in $\mathcal D'(K)$ of a subsequence of $w_n=Y_\chi w_n+\nabla Z_\chi w_n$ to $y+\nabla z$. As by hypothesis $w_n$ converges in the sense of distributions to $w$ it follows that 
$$w\overset{\mathcal D'(K)}{=}y+\nabla z.$$

Finally, we note that the boundeness of $\{v_n\}$ in $\mathcal M_{0,\divb}$ and its convergence to $v$ in the sense of distributions imply the weak $\mathcal M_0-$convergence of $\{v_n\}$ and $\{\divb v_n\}$ to $v$ and $\divb v$ respectively. Thus, by using $\varphi=\tilde \psi\psi\varphi$, $\varphi=\psi\varphi$, $\nabla\varphi=\psi\nabla\varphi$ and the strong convergences in $\mathcal C^0$ of $\tilde\psi\psi\,  Y_\chi w_n$ and $\psi\, Z_\chi w_n$, we obtain:
$$\langle (v_nw_n)_H,\varphi\rangle= \int v_nY_\chi w_n \varphi- (\divb\,v_n)Z_\chi w_n\varphi- v_nZ_\chi w_n\nabla \varphi$$
$$=\int v_n(\tilde\psi\psi \, Y_\chi w_n)\varphi- (\divb\,v_n)(\psi Z_\chi w_n)\,\varphi- v_n(\psi Z_\chi w_n)\nabla \varphi$$
$$\overset{n\rightarrow\infty}{\longrightarrow} \int vy\varphi- (\divb\,v)z\varphi- vz\nabla \varphi.$$
Now we will show that this last quantity is precisely $\langle (vw)_H,\varphi\rangle$, by proving $y=Y_\chi w, z=Z_\chi w$. Indeed, for a test function $\phi\in\mathcal C^\infty_0(K)$ we have:
$$\langle Z_\chi w-z,\phi\rangle
=\langle Z_\chi w-Z_\chi w_n,\phi\rangle+\langle Z_\chi w_n-z,\phi\rangle$$
$$=\langle (-\chi(D)(-\Delta)^{-1}\divb)(w-w_n), \phi\rangle +o(1)$$
$$=\langle w-w_n, (-\chi(D)(-\Delta)^{-1}\divb)^*\phi\rangle +o(1)=o(1),$$
since $(-\chi(D)(-\Delta)^{-1}\divb)^*\phi\in \mathcal S\subset L^1$, $\mathcal C^\infty_0$ is dense in $L^1$, $\{w_n\}$ is bounded in $L^\infty$ and $\{w_n\}$ converges to $w$ in $\mathcal D'$. Thus $z=Z_\chi w$ in $\mathcal D'(K)$ and therefore we also have $y=w-\nabla z=w-\nabla Z_\chi w=Y_\chi w$ in $\mathcal D'(K)$. Since they are continuous functions and by varying $K$ we obtain that $y=Y_\chi w, z=Z_\chi w$.

Therefore we have proved the existence of a subsequence $(v_{n_k}w_{n_k})_H$ converging to $(vw)_H$ in the sense of distributions. This implies that the result is valid on the whole sequence because there is only one possible limit. Thus Theorem \ref{theo1} is proved in the case $\mathcal M_{0,\divb}\times L^\infty_{\curl}$.

The other cases of spaces can be treated similarly, by using Proposition \ref{prop1} and  the fact that the smooth localization $u\in\ W^{\epsilon,p}\to \chi u \in L^p$ is compact for $\epsilon>0$ and $1\leq p\leq \infty$.\\

\appendix
\section{}\label{app}
In this appendix we recall the definition of pseudodifferential operators and describe their action on $L^p$ spaces and on $\mathcal M_0$. We refer to~\cite[Chapter XVIII]{Ho} for a general presentation or~\cite[\S VI.6]{Stein} for a presentation closer to our needs of the pseudodifferential calculus (see also \cite{AlGe}).
Let us first recall the definitions of the class of symbol of order $\delta\in\mathbb R$: 
$$
 S^\delta( \mathbb{R}^d) =\{a\in C^\infty( \mathbb{R}^{d});  \\
 \forall \alpha, \beta \in \mathbb{N}^d, \sup_{x,\xi \in \mathbb{R}^d}  |\partial_x^\alpha  \partial_\xi^\beta a(x, \xi)| (1+ |\xi|)^{-\delta+|\gamma|}=: \|a\|_{\delta, \alpha, \beta}<+\infty\}.
$$
To any symbol $a \in S^\delta_{\text{cl}}(\mathbb{R}^d)$ we can associate an operator on the temperate distributions set $\mathcal{S}'( \mathbb{R}^d)$ by the formula
$$ a(x,D_x) u (x)= \text{Op} (a) u(x)= \frac{ 1 }{(2\pi)^d} \int e^{i(x-y) \cdot \xi} a(x, \xi) u(y) dy d\xi.$$
The set of such operators, that are called pseudodifferential operators of order $\delta$, is denoted by $\Psi^\delta$. The set $\Psi^{-\infty}$ is defined as $\cap_{\delta<0}\Psi^\delta$.  In the following proposition we gather the results needed in this note regarding to the action of pseudodifferential operators.

\begin{prop}\label{pseudo}
Let $A = Op(a) \in \Psi^\delta_{\text{cl}}$. Then $A$ acts continuously:
\begin{enumerate}
\item for $1< p < +\infty$, from $W^{s,p}( \mathbb{R}^d)$ to $W^{s-\delta,p}( \mathbb{R}^d)$,
\item for $p \in\{1, +\infty\}$ and $\epsilon>0$, from $W^{s,p}( \mathbb{R}^d)$ to $W^{s-\delta-\epsilon,p}( \mathbb{R}^d)$,
\item for $\delta<0$ and $\epsilon>0$, from $\mathcal M_0(\mathbb R^n)$ to $W^{-\delta-\epsilon,1}(\mathbb R^n)$.
\end{enumerate}
\end{prop}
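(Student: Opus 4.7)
The plan is to treat the three parts separately, each by reduction to classical facts from pseudodifferential calculus and Littlewood--Paley theory. A preliminary move common to all three: by the composition rule for classical pseudodifferential operators, if $A\in\Psi^\delta_{\text{cl}}$ then $(I-\Delta)^{(\delta-s)/2}A(I-\Delta)^{s/2}\in\Psi^0_{\text{cl}}$, so the mapping $A:W^{s,p}\to W^{s-\delta,p}$ reduces to an action of order-zero operators on $L^p$ (possibly with loss). For part (i) with $1<p<\infty$, this is the classical Calder\'on--Zygmund theorem: a symbol $a\in S^0$ has Schwartz kernel smooth off the diagonal satisfying standard Calder\'on--Zygmund estimates $|\partial^\alpha_x\partial^\beta_y K(x,y)|\lesssim |x-y|^{-d-|\alpha|-|\beta|}$, and the $L^2$-boundedness (via Plancherel applied to the symbol) propagates to $L^p$ by interpolation plus the weak-$(1,1)$ endpoint.

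For part (ii), the endpoints $p\in\{1,\infty\}$ are the main obstacle, since order-zero operators are not in general bounded on $L^1$ or $L^\infty$. I would bypass this via Littlewood--Paley theory: operators in $\Psi^0_{\text{cl}}$ are bounded on every Besov space $B^\sigma_{p,q}(\mathbb R^d)$, for $1\le p,q\le\infty$ and $\sigma\in\mathbb R$, a fact that follows from a dyadic Littlewood--Paley decomposition at the symbol level. Then one chains the standard comparison embeddings between Sobolev and Besov spaces, e.g.\ $W^{s,p}\hookrightarrow B^{s}_{p,\infty}\hookrightarrow B^{s-\epsilon}_{p,1}\hookrightarrow W^{s-\epsilon,p}$, valid at the endpoints with an arbitrarily small regularity loss $\epsilon>0$ (the middle step just sums a geometric series over dyadic scales). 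Combining this chain with the composition reduction above yields $A:W^{s,p}\to W^{s-\delta-\epsilon,p}$ as claimed.

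For part (iii), the strategy is to embed $\mathcal M_0(\mathbb R^d)$ into a negative-order Sobolev space and then invoke (ii). Specifically, I claim $\mathcal M_0\hookrightarrow W^{-\epsilon,1}$ for every $\epsilon>0$: the Bessel kernel $G_\epsilon:=\mathcal F^{-1}(\langle\xi\rangle^{-\epsilon})$ lies in $L^1(\mathbb R^d)$, because it decays exponentially at infinity and behaves like $|x|^{\epsilon-d}$ near the origin, both integrable. Hence, for any $\mu\in\mathcal M_0$, the convolution $(I-\Delta)^{-\epsilon/2}\mu=G_\epsilon*\mu$ lies in $L^1$, with $\|G_\epsilon*\mu\|_{L^1}\le \|G_\epsilon\|_{L^1}\|\mu\|_{\mathcal M_0}$, which is exactly the embedding $\mu\in W^{-\epsilon,1}$. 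Splitting the loss as $\epsilon=\epsilon/2+\epsilon/2$ and applying (ii) at $p=1$ with loss $\epsilon/2$ gives $A:\mathcal M_0\hookrightarrow W^{-\epsilon/2,1}\to W^{-\delta-\epsilon,1}$, as desired. Once step (ii) is in place, parts (i) and (iii) are essentially immediate, so the real work concentrates on the Besov-space detour in (ii).
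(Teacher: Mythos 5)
Your proposal is correct in substance but takes a genuinely different route from the paper where it matters. For (i)--(ii) the paper simply cites Stein (the Calder\'on--Zygmund theorem for (i), and the dual of the Lemma in \S VI.5.3.1 for the endpoint loss in (ii)); your Besov-space detour $W^{s,p}\hookrightarrow B^{s}_{p,\infty}\hookrightarrow B^{s-\epsilon}_{p,1}\hookrightarrow W^{s-\epsilon,p}$ combined with the boundedness of $\Psi^0_{1,0}$ on all $B^\sigma_{p,q}$ is a legitimate alternative, though you are invoking that Besov boundedness as a black box whose standard proof consists of exactly the dyadic kernel estimates (Schur bounds on $A\phi_j(D)$ obtained by integrating by parts with $L=i(x-y)\cdot\nabla_\xi/\|x-y\|^2$) that the paper writes out explicitly. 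The real divergence is in (iii), the only part the paper proves in detail: the paper decomposes $A=\sum_j A\phi_j(D)$ and shows $\int|K_j(x,y)|\,dx+\int|K_j(x,y)|\,dy\lesssim 2^{j\delta}$, sums the geometric series to get $\Psi^\delta:\mathcal M_0\to L^1$ for $\delta<0$, and then composes with (ii); you instead prove the single embedding $\mathcal M_0\hookrightarrow W^{-\epsilon/2,1}$ via integrability of the Bessel kernel $G_{\epsilon/2}$ and then apply (ii) to $A$ itself with $s=-\epsilon/2$. Your version is arguably cleaner, since it isolates the only measure-theoretic input ($G_\epsilon\in L^1$, $\|G_\epsilon*\mu\|_{L^1}\le\|G_\epsilon\|_{L^1}\|\mu\|_{\mathcal M_0}$) and delegates the rest to (ii), at the price of making (iii) depend on the endpoint case $p=1$ of (ii) rather than being self-contained. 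One slip to fix: the conjugated operator in your preliminary reduction should be $(I-\Delta)^{(s-\delta)/2}A(I-\Delta)^{-s/2}$, which has order $(s-\delta)+\delta-s=0$; as written, $(I-\Delta)^{(\delta-s)/2}A(I-\Delta)^{s/2}$ has order $2\delta$.
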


\begin{proof}
The results (i)-(ii) are classical: for the first one see for instance \cite[Section VI.5.2]{Stein}, and the second follows by the dual estimate of the Lemma in~\cite[Section VI.5.3.1]{Stein}. The result (iii) is less classical and we give here a complete (simple) proof. 
We use a dyadic partition of unity 
$$ 1 =\sum_{j\geq 0} \phi_j (\xi) , \, \phi_0 \in C^\infty _ 0 (\mathbb{R}^d), \,\forall j \geq 1, \phi_j (\xi) =  \phi( 2^{-j} \xi), \,\phi \in C^\infty_0 ( \{ \frac 1 2 < \| \xi\| <2 \} ),$$
 to decompose
 $$  A  = \sum_{j\geq 0}   A\,  \phi_j (D).$$
 Each operator of the sum has the kernel
$$
K_j(x,y):=  \frac 1 {(2\pi)^d} \int e^{i(x-y) \cdot \xi }  a(x,\xi) \phi_j(\xi)  d\xi.$$
In view of the decay of $a$ and of the localization of $\phi_j$ we have
$$|K_j(x,y)|\leq C 2^{j(d+\delta)}\|a\|_{\delta,0,0}.$$
Also, integrating by parts $N$ times using the identity
$$ L(e^{i(x-y) \cdot \xi} )= - e^{i(x-y) \cdot \xi}, \qquad  L= \frac {i (x-y) \cdot \nabla_\xi} {\| x-y\|^2} ,$$
we get for $N>1$ if $\|x-y\|\neq 0$, that
$$
|K_j(x,y)|=\Big|  \frac {1} {(2\pi)^d} \int e^{i(x-y) \cdot \xi } L^N (a(x,\xi) \phi_j(\xi) ) d\xi\Big| \leq C 2^{j(d+\delta)}\frac{\|a\|_{\delta,0,N}}{(2^j\|x-y\|)^N}.
$$
Therefore by taking $N=d+1$ we obtain
$$\int|K_j(x,y)|dx=\int_{2^j\|x-y\|<1}|K_j(x,y)|dx+\int_{2^j\|x-y\|>1}|K_j(x,y)|dx
\leq C 2^{j\delta}(\|a\|_{\delta,0,0}+\|a\|_{\delta,0,N}),
$$
and similarly 
$$\int|K_j(x,y)|dy\leq C(a)2^{j\delta}.$$
For $\mu\in\mathcal M_0$ we have, since $\delta<0$,
$$\|A\mu\|_{L^1}\leq \sum_{j\geq 0}\Big\|\int K_j(x,y)d\mu(y)\Big\|\leq C(a) \|\mu\|\sum_{j\geq 0}2^{j\delta}\leq C(a)\|\mu\|.$$
In conclusion $\Psi^\delta$ acts continuously from $\mathcal M_0(\mathbb R^n)$ to $L^1(\mathbb R^n)$ for any $\delta<0$. Combining with (ii) we obtain that $\Psi^\delta$ acts continuously from $\mathcal M_0(\mathbb R^n)$ to $W^{-\delta-\epsilon,1}(\mathbb R^n)$ for any $\delta<0$ and $\epsilon>0$.
\end{proof}


\begin{thebibliography}{99999}
 \bibitem{AlGe} S. Alinhac and P. G\'erard, 
{\em Op\'erateurs pseudo-diff\'erentiels et th\'eor\`eme de Nash-Moser}, InterEditions, Paris, \'Editions du CNRS, 1991. 

 \bibitem{BBWFL1} V. Banica and N. Burq, 
 {\em Microlocal analysis of singular measures,} 
Math. Z. {\bf 305} (2023), Art. No. 51, 36pp.

 
 \bibitem{BrVS}
H. Br\'ezis, J. Van Schaftingen, 
{\em Boundary estimates for elliptic systems with $L^1$ data},
Calc. Var. PDE {\bf 30} (2007), 369--388.

\bibitem{Br}
M. Briane, J. Casado-Díaz and F. Murat, 
{\em The div-curl lemma ``trente ans apr\`es'': an extension and an application to the G-convergence of unbounded monotone operators},
J. Math. Pures Appl. {\bf 91} (2009), 476--494.

\bibitem{Ho} L. H\"ormander, 
{\em {The Analysis of Linear Partial Differential Operators III}},
{Grundlehren der mathematischen Wissenschaften}, Vol. 274, Springer Verlag, 1985.


\bibitem{Li}
P.-L. Lions,
{\em The concentration compactness principle in the calculus of variations. The limit case, Part 1}, 
Rev. Mat. Iberoamericana {\bf 1} (1985), 145--201.

\bibitem{Mu1}
F. Murat, 
{\em Compacit\'e par compensation}, 
Ann. Scuola Norm. Sup. Pisa, Ser. IV {\bf 5} (1978), 489---507.

\bibitem{Mu2}
F. Murat, 
{\em Compacit\'e par compensation: condition n\'ecessaire et suffisante de continuit\'e faible sous une hypoth\`ese de rang constant}, 
Ann. Scuola Norm. Sup. Pisa, Ser. IV {\bf 8} (1981), 69--102.

\bibitem{Mu3}
F. Murat, 
{\em A survey on compensated compactness. Contributions to modern calculus of variations (Bologna, 1985)}, 145--183.
Pitman Res. Notes Math. Ser., 148
Longman Scientific \& Technical, Harlow, 1987.

\bibitem{Pr}
C. Prange,
{\em Weak and strong convergence methods for Partial Differential Equations}, 
Short graduate course at the University of Bordeaux 2016, 
\href{http://prange.perso.math.cnrs.fr/documents/coursEDMI2016_lecture3.pdf}{Lecture3.pdf}

\bibitem{Stein}
E. M. Stein, 
{\em Harmonic analysis: real-variable methods, orthogonality, and oscillatory integrals}, Princeton Mathematical Series, 43. Monographs in Harmonic Analysis, III. Princeton University Press, Princeton, NJ, 1993. xiv+695 pp.

\bibitem{Ta1}
L. Tartar, 
{\em Compensated compactness and applications to partial differential equations}, 
Nonlinear Analysis and Mechanics, Heriot Watt Symposium IV, Pitman, San Francisco, 1979, pp. 136--212.  

\bibitem{Ta2}
L. Tartar, 
{\em Une nouvelle m\'ethode de r\'esolution d'\'equations aux d\'eriv\'ees partielles non lin\'eaires}, Journées d'Analyse Non Lin\'eaire, Proc. Conf., Besan\c{c}on, 1977, Lecture Notes in Math., vol. 665, Springer, Berlin, 1978, pp. 228--241.


\bibitem{Tartarbook}
L. Tartar, 
{\em The general theory of homogenization.
A personalized introduction}, Lect. Notes Unione Mat. Ital., 7
Springer-Verlag, Berlin; UMI, Bologna, 2009. xxii+470 pp.
\end{thebibliography}
\end{document}